\newtheorem{theorem}{Theorem}
\newtheorem{proposition}[theorem]{Proposition}
\newtheorem{lemma}[theorem]{Lemma}
\newtheorem{remark}[theorem]{Remark}
\theoremstyle{definition}
\newcommand{\R}{\mathbb{R}}
\newcommand{\Q}{\mathbb{Q}}
\newcommand{\Sf}{\mathbb{S}}
\newcommand{\Hy}{\mathbb{H}}
\newcommand{\grad}{\mbox{grad\,}}
\def\<{{\langle}}
\def\>{{\rangle}}
\def\id{\mbox{id}}
\def\be{\begin{equation} }
\def\ee{\end{equation} }
\def\proof{\noindent{\it Proof:  }}
\def\qed{\ifhmode\unskip\nobreak\fi\ifmmode\ifinner
\else\hskip5 pt \fi\fi\hbox{\hskip5 pt \vrule width4 pt
height6 pt  depth1.5 pt \hskip 1pt }}
\newcommand{\subjclass}[2][]{\let\@oldtitle\@title
\gdef\@title{\@oldtitle\footnotetext{#1 
\emph{Mathematics Subject Classification:} #2}}}
\newcommand{\keywords}[1]{\let\@@oldtitle\@title
\gdef\@title{\@@oldtitle\footnotetext
{\emph{Key words and phrases.} #1.}}}
\title{}
\author{A. N. S. Carvalho\footnote{Corresponding author}  and R. Tojeiro
      \footnote{The first author was supported by  CAPES-PROEX grant 12498478/D. The second author was partially supported by  Fapesp grant 2022/16097-2 and CNPq grant 307016/2021-8.\\
      Data availability statement: Not applicable. \\
      \\
Emails: \texttt{arnandonelio@usp.br}, \texttt{tojeiro@icmc.usp.br}.
}}
\date{}
\begin{document}
\title{Constant curvature hypersurfaces of cylinders over space forms}
\maketitle

\begin{abstract}{
We classify the hypersurfaces of dimension $n\geq 3$ with constant sectional curvature of 
$\mathbb{R}^k\times \mathbb{S}^{n-k+1}$ and $\mathbb{R}^k\times \mathbb{H}^{n-k+1}$,  $2\leq k\leq n-1$.}   
\end{abstract}

\noindent \emph{2020 Mathematics Subject Classification:} 53 B25.\vspace{2ex}

\noindent \emph{Keywords and phrases:} {\small {\em Hypersurfaces with constant sectional curvature, products of space forms. }}

\section{Introduction}

The problem of classifying (locally or globally)  submanifolds with constant sectional curvature
of different ambient spaces is an important topic in Submanifold theory. When the ambient space is a space form, the problem has a long history, with outstanding contributions by E. Cartan \cite{Ca} in the higher dimensional case. In the particular case of hypersurfaces with dimension $n\geq 3$, a complete classification is well known (see, eg., \cite{Tojeiro}). 

  The case in which the ambient space is a product of space forms has recently attracted the attention of several geometers. 
  In particular, hypersurfaces with constant curvature of $\mathbb{S}^2\times \mathbb{S}^2$ were classified in \cite{Luc1}.
  The classification in the case of $\mathbb{H}^2\times \mathbb{H}^2$ as the ambient space was recently achieved by the same authors in \cite{Luc2}. 
  
  On the other hand, hypersurfaces of $\mathbb{R}\times \mathbb{S}^n$ and $\mathbb{R}\times \mathbb{H}^n$ with dimension $n\geq 3$ and constant sectional curvature were classified in \cite{ManfioTojeiro}, after a global classification was obtained for the case $n=2$ in \cite{AEG}.    
  
  In this article, we extend the classification in \cite{ManfioTojeiro} to the case in which the ambient space is $\mathbb{R}^k\times \mathbb{S}^{n-k+1}$ or $\mathbb{R}^k\times \mathbb{H}^{n-k+1}$, with  $2\leq k\leq n-1$. Together with recent results in \cite{KNT}, this completes the classification of all hypersurfaces with dimension $n\geq 3$ and constant sectional curvature of a product of two space forms.

 The paper is organized as follows. In the first section, we summarize from \cite{LTV} the basic equations of a hypersurface of a product of two space forms. The second section is devoted to the proof of a nonexistence result for hypersurfaces with nonzero constant sectional curvature in $\mathbb{R}^k\times \mathbb{S}^{n-k+1}$ and $\mathbb{R}^k\times \mathbb{H}^{n-k+1}$,
  $2\leq k\leq n-1$ (see Theorem \ref{nonexistence}). In the last section, we provide a local classification of the flat hypersurfaces of $\mathbb{R}^k\times \mathbb{S}^{n-k+1}$ and $\mathbb{R}^k\times \mathbb{H}^{n-k+1}$,   $2\leq k\leq n-1$ (see Theorems~\ref{thm:flat_sphere} and \ref{thm: flatH}). In particular, it is shown that flat hypersurfaces only exist in $\mathbb{R}^k\times \mathbb{S}^{n-k+1}$,  $2\leq k\leq n-1$,  if $k\in \{n-1, n-2\}$.

 \section{Preliminaries}

The basic equations of a submanifold with arbitrary dimension and codimension of a product of two space forms were established independently in \cite{Kow}  and \cite{LTV}. We summarize below those equations in the special case of hypersurfaces,  in which we are interested in this article, following the approach in \cite{LTV}.

  Given an isometric immersion  $f\colon M^n \to  \mathbb{Q}_{c_1}^{k}\times \mathbb{Q}_{c_2}^{n-k+1}, 2 \leq k \leq n-1,$ into a product of two space forms of constant sectional curvatures $c_1$ and $c_2$, 
   let  $N$ be a (local) unit normal vector field along $f$ and let $A$ be the shape operator of $f$ with respect to $N$. Let $\pi_1$ (respectively, $\pi_2$) denote both the projection of $\mathbb{Q}_{c_1}^{k}\times \mathbb{Q}_{c_2}^{n-k+1}$ onto $\mathbb{Q}_{c_1}^k$ (respectively, $\mathbb{Q}_{c_2}^{n-k+1}$) and its derivative, and let $t=t_f\in C^{\infty}(M)$,
   $\xi=\xi_f\in \mathfrak{X}(M)$, and $R=R_f\in \Gamma(T^*M\otimes TM)$ be defined by
\begin{equation} \label{t}
\pi_2N=f_*\xi+tN
\end{equation}
and 
\begin{equation}\label{eq:L} \pi_2 f_*X=f_*RX+\langle X, \xi\rangle N
\end{equation}
for all  $ X \in \mathfrak{X}(M)$. 
\begin{remark}\label{cod} \emph{For isometric immersions  $f\colon M^n \to  \mathbb{Q}_{c_1}^{n_1}\times \mathbb{Q}_{c_2}^{n_2}$ with arbitrary codimension, equations \eqref{t} and \eqref{eq:L} are expressed, for all $\eta\in \Gamma(N_fM)$ and $X\in \mathfrak{X}(M)$, as
$$\pi_2\eta=f_*S^t\eta+T\eta$$
and
$$\pi_2 f_*X=f_*RX+SX,$$
in terms of $R$ and tensors $S\in \Gamma(T^*M\otimes N_fM)$ and $T\in \Gamma((N_fM)^*\otimes N_fM)$, where $S^t$ denotes the transpose of $S$. In the case of hypersurfaces, the tensor $S$ can be seen as a one-form on $M$, and can be expressed in terms of the vector field $\xi$ and a unit normal vector field $N$ by $SX=\<\xi, X\>N$ for all $X\in \mathfrak{X}(M)$. In particular, vanishing of $S$ is equivalent to the vanishing of $\xi$. In turn, the tensor $T$ is determined by the single function $t\in C^\infty(M)$.}
\end{remark}

From
$\pi_2^2=\pi_2$
it follows that
\begin{equation}\label{eq:pi1}
R(I-R)X=\langle X, \xi\rangle \xi,\,\,\,\,\,((1-t)I-R)\xi=0\,\,\,\,\,\mbox{and}\,\,\,\,\, t(1-t)=\|\xi\|^2
\end{equation}
for all $X\in \mathfrak{X}(M)$. The definition of
$R$ implies that it is a symmetric endomorphism. By the first equation in \eqref{eq:pi1}, if $\xi$ vanishes at  $x\in M^n$, then $T_xM$ decomposes orthogonally as $T_xM=\ker R \operp \ker (I-R)$, hence $0$ and $1$ are the only eigenvalues of $R$. Otherwise, $T_xM$ decomposes orthogonally as
\begin{equation}\label{decomp}
T_xM=\ker R \operp \ker (I-R)\operp  \mbox{span} \, \{\xi\},
\end{equation}
in which case, in view of the second equation in \eqref{eq:pi1},  the eigenvalues of $R$ are $0$, $1$, and $r=1-t$, with corresponding eigenspaces $\ker R$, $\ker (I-R)$, and $ \mbox{span} \, \{\xi\}$. Moreover, in Lemma 3.2 of \cite{BrunoandRuy} it was shown that the subspaces $\ker R$ and $\ker (I-R)$ give rise to smooth subbundles of $TM$ in open subsets where they have constant dimension. 

  It follows from \eqref{eq:L} and \eqref{decomp}  that  $X\in \ker R$ if and only if $\pi_{2}f_{*}X=0$. Similarly,  $X\in \ker (I-R)$ if and only if $\pi_{1}f_{*}X=0$. Thus,
  $$f_*\ker R(x)= f_*T_xM\cap T_{\pi_1(f(x))}\mathbb{Q}_{c_1}^{k} \,\,\mbox{and}\,\, f_*\ker (I- R)(x)= f_*T_xM\cap T_{\pi_2(f(x))}\mathbb{Q}_{c_2}^{n-k+1}$$ 
    for every $x\in M^n$. Since $f_*T_xM$ has codimension one in $T_{f(x)}(\mathbb{Q}_{c_1}^{k}\times \mathbb{Q}_{c_2}^{n-k+1})=T_{\pi_1(f(x))}\mathbb{Q}_{c_1}^{k} \oplus  T_{\pi_2(f(x))}\mathbb{Q}_{c_2}^{n-k+1}$, this implies the following observation.

   \begin{lemma}\label{lemma for dimension of kernel of Rb}
    Let $f\colon M^{n}\rightarrow \mathbb{Q}_{c_1}^{k}\times \mathbb{Q}_{c_2}^{n-k+1}$ be an isometric immersion. Then 
    \begin{equation} \label{ineq}
    k-1\leq \dim \ker R\leq k\quad \mbox{and}\quad n-k\leq \dim \ker (I-R)\leq n-k+ 1.
    \end{equation}
\end{lemma}

We say that a hypersurface 
$
f\colon M^{n}\rightarrow \mathbb{Q}_{c_1}^{k}\times \mathbb{Q}_{c_2}^{n-k+1}
$ 
\emph{splits} if $M^n$ is (isometric to) a Riemannian product 
$
M_1^{k-1}\times \mathbb{Q}_{c_2}^{n-k+1}
$ 
or 
$
\mathbb{Q}_{c_1}^{k}\times M_2^{n-k}
$ 
and either 
$
f=f_{1}\times \operatorname{id}_{2}\colon M_1^{k-1}\times \mathbb{Q}_{c_2}^{n-k+1}\to  \mathbb{Q}_{c_1}^{k}\times \mathbb{Q}_{c_2}^{n-k+1} 
$ 
or 
$
f=\operatorname{id}_{1}\times f_{2}\colon \mathbb{Q}_{c_1}^{k}\times M_2^{n-k}\rightarrow \mathbb{Q}_{c_1}^{k}\times \mathbb{Q}_{c_2}^{n-k+1},
$ 
respectively, where 
$
f_{1}\colon M_1^{k-1}\rightarrow \mathbb{Q}_{c_1}^{k}
$ 
and 
$
f_{2}\colon M_2^{n-k}\rightarrow  \mathbb{Q}_{c_2}^{n-k+1}
$ 
are hypersurfaces and 
$\operatorname{id}_{1}$ and $\operatorname{id}_{2}$ are the respective identity maps. \vspace{1ex}

 \begin{proposition}\label{product locally splits}
     A hypersurface $f\colon M^{n}\rightarrow \mathbb{Q}_{c_1}^{k}\times \mathbb{Q}_{c_2}^{n-k+1}$, $2\leq k\leq n-1$,  splits locally if and only if the vector field $\xi$ vanishes. In particular,  if either $\dim\ker R=~k$ or $\dim \ker (I-R)=n-k+1$, then $f$ splits locally.
 \end{proposition}
 \proof Notice that, by \eqref{ineq}, one can have neither $R=0$ nor $R=I$ if $2\leq k\leq~n~-~1$. Taking into account Remark \ref{cod}, the first statement is then a consequence of Proposition $3.3$ in \cite{BrunoandRuy}. If  $\dim\ker R=k$ , since $\dim\ker(I-R)\geq n-k$ by \eqref{ineq},  then  $TM=\ker R\operp \ker (I-R)$, that is, $\xi$ vanishes identically, and hence $f$ splits locally. Similarly if $\dim \ker (I-R)=n-k+1$.\vspace{1ex}\qed

Computing the tangent and normal components in
$\nabla\pi_2=\pi_2\nabla$, applied to both tangent and normal vectors, and using the Gauss and Weingarten formulae, we obtain
\begin{equation}\label{derR}
(\nabla_XR)Y= \langle Y, \xi\rangle AX+\langle AX,Y\rangle \xi,
\end{equation}
\begin{equation}\label{derS2}
\nabla_X\xi=(tI-R)AX,
\end{equation}
for all $X, Y\in\mathfrak{X}(M)$, 
and 
\begin{equation}\label{dert2}
\grad t=-2A\xi.
\end{equation}

The Gauss and Codazzi equations of $f$ are
$$
R(X,Y)=c_1(X\wedge Y-X\wedge RY-RX\wedge Y) + (c_1+ c_2)RX\wedge RY +AX\wedge AY
$$
and 
\begin{equation}\label{codazzi}
(\nabla_XA)Y-(\nabla_YA)X=
(c_1(I-R)-c_2R)(X\wedge Y)\xi
\end{equation}
for all $X,Y\in\mathfrak{X}(M)$. \vspace{2ex}

Let $\R^N_\rho$ denote an Euclidean space with a standard flat metric of index $\rho$, and consider the standard inclusion  
$$
j\colon\,\Q_{c_1}^{k}\times \Q_{c_2}^{n-k+1}\to\R_{\sigma(c_1)}^{N_1}\times \R_{\sigma(c_2)}^{N_2}=\R_{\mu}^{N_1+N_2}, \,\,\,\mu = \sigma(c_1) + \sigma(c_2),
$$
where  $ \sigma(c)=0 $ if $ c \geq 0$,  $ \sigma(c)=1 $ if $c<0$,  
$$
N_1 = \begin{cases}
    k +1, \text{ if } c_1 \neq 0, \\ 
    k , \text{ if } c_1 = 0,
\end{cases}  
\quad \text{ and } \quad 
N_2 = \begin{cases}
    n-k +2, \text{ if } c_2 \neq 0, \\
    n-k+1 , \text{ if } c_2 = 0.
\end{cases} 
$$
Given a hypersurface $f\colon\,M^n\to \Q_{c_1}^{k}\times \Q_{c_2}^{n-k+1}$, denote $\tilde f=j\circ f$.
Decompose the position vector field $\tilde{f}$ as 
$
\tilde{f}=\tilde f_1+\tilde f_2,
$ 
where $\tilde f_1$ and $\tilde f_2$ are the corresponding components in $ \mathbb{R}_{\sigma(c_1)}^{N_1}$ and $ \mathbb{R}_{\sigma(c_2)}^{N_2}$, respectively.
If both $c_1$ and $c_2$ are nonzero, write $c_i=\epsilon_i/r_i^2$, $1\leq i\leq 2$, where $\epsilon_i$ is either $1$ or $-1$, depending on whether $c_i>0$ or $c_i<0$, respectively. 
Then the second fundamental form of $\tilde{f}$ is 
$$
    \alpha^{\tilde f}(X,Y)=\<AX,Y\>j_*N+\frac{\epsilon_1}{r_1}\<(I-R)X,Y\>)\tilde f_1+\frac{\epsilon_2}{r_2}\<RX,Y\>\tilde f_2
$$
for all $X,Y \in \mathfrak{X}(M)$. If $0=c_1\neq c_2$, then  
\begin{equation}\label{alpha2}
    \alpha^{\tilde f}(X,Y)=\<AX,Y\>j_*N+\frac{\epsilon_2}{r_2}\<RX,Y\>\tilde f_2
\end{equation} 
for all $X,Y \in \mathfrak{X}(M)$.

 \section{A nonexistence theorem in the nonflat case}
 
 Our first result deals with hypersurfaces with nonzero constant sectional curvature of  $ \mathbb{R}^2\times \mathbb{Q}^2_\epsilon$,  $\epsilon\in \{-1, 1\}$.

\begin{proposition} \label{prop:n=3} There does not exist any hypersurface with nonzero constant sectional curvature in  $ \mathbb{R}^2\times \mathbb{Q}^2_\epsilon$,  $\epsilon\in \{-1, 1\}$.
\end{proposition}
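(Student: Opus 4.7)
My plan is to split the argument into two cases according to whether the immersion splits locally. On an open set where $\dim\ker R=2$ or $\dim\ker(I-R)=2$, Lemma~\ref{lemma for dimension of kernel of Rb} together with Proposition~\ref{product locally splits} forces $M^3$ to be locally a Riemannian product of the form $M_1^1\times\mathbb{Q}_\epsilon^2$ or $\mathbb{R}^2\times M_2^1$. In the first product, planes tangent to $\mathbb{Q}_\epsilon^2$ have sectional curvature $\epsilon\ne0$ while mixed $2$-planes have curvature $0$; in the second, every sectional curvature vanishes. Neither case is compatible with constant nonzero curvature $c$, so both kernel dimensions must equal $1$ on a dense open subset of $M^3$. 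Since $\dim M^3=3$, the algebraic relations in \eqref{eq:pi1} then force $\xi\ne0$ there.

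On this open subset I fix a local orthonormal frame $e_1,e_2,e_3$ with $Re_1=0$, $Re_2=e_2$, $Re_3=re_3$ and $\xi=se_3$, where $r=1-t$ and $s=\|\xi\|$. Substituting $c_1=0$ and $c_2=\epsilon$ into the Gauss equation \eqref{Gauss} produces three sectional curvature identities together with the off-diagonal relations
\[
A_{11}A_{23}=A_{12}A_{13},\qquad A_{12}A_{23}=A_{13}A_{22},\qquad A_{13}A_{23}=A_{12}A_{33},
\]
which come from imposing constant curvature on $\langle R(e_i,e_j)e_k,e_\ell\rangle$ for triples of distinct indices. The main obstacle is to show that these, together with $c\ne0$, force $A$ to be diagonal in the frame: a short case analysis reduces every configuration with a nonvanishing off-diagonal entry to one in which some sectional curvature equation collapses to $c=0$. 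Once diagonality is established, the eigenvalues are necessarily $\lambda_1,\mu,\mu$ with $\lambda_1\mu=c$ and $\mu^2=c-\epsilon r$.

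The contradiction will then come from two independent computations of $e_3(r)$. The $e_2$-component of the Codazzi equation \eqref{codazzi} applied to $(X,Y)=(e_2,e_3)$ yields $e_3(\mu)=\epsilon s$, and differentiating $\mu^2=c-\epsilon r$ along $e_3$ gives $e_3(r)=-2s\mu$. On the other hand, the $e_3$-component of \eqref{derR} applied to $X=Y=e_3$ reads $\langle(\nabla_{e_3}R)e_3,e_3\rangle=2s\mu$, while expanding $(\nabla_{e_3}R)e_3=e_3(r)e_3+(rI-R)\nabla_{e_3}e_3$ shows that this same component equals $e_3(r)$ (the tangential correction is orthogonal to $e_3$), so $e_3(r)=2s\mu$. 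Combining the two formulas gives $4s\mu=0$, contradicting $s\ne0$ (from $\xi\ne0$) and $\mu\ne0$ (otherwise $c=\lambda_1\mu=0$).
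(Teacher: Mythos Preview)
Your proof is correct and follows essentially the same strategy as the paper: reduce to the situation where $A$ and $R$ are simultaneously diagonal in the $R$-eigenframe, extract the relations $\lambda_1\mu=c$ and $\mu^{2}+\epsilon r=c$ from the Gauss equation, and then reach a contradiction by computing the derivative of $r$ along $\xi$ in two incompatible ways.

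The one substantive difference is in how simultaneous diagonalizability is obtained. The paper invokes Proposition~\ref{Proposition ManfioTojeiro} (the composition $\tilde f\colon M_c^3\to\mathbb{R}_\mu^{5}$ has flat normal bundle when $c\neq 0$) to conclude at once that $[A,R]=0$. You replace this by an algebraic case analysis on the off-diagonal Gauss relations; the analysis does go through (if all three $A_{ij}$ with $i<j$ are nonzero the relations give $A_{11}A_{22}=A_{12}^{2}$, hence $c=0$; if exactly one is nonzero they force a diagonal entry to vanish and again a sectional-curvature identity collapses to $c=0$; exactly two nonzero is directly contradicted by one of the relations), but you should spell it out rather than just assert it. Your computation $e_3(r)=2s\mu$ from \eqref{derR} is a minor variant of the paper's use of \eqref{dert2}; the paper also obtains $\xi(\mu)=\epsilon\|\xi\|^{2}$ from Codazzi exactly as you do. Your route has the advantage of being self-contained, avoiding the external flat-normal-bundle result; the paper's is shorter because that step is outsourced.
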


For the proof, we shall use the following fact, whose proof can be found in \cite{ManfioTojeiro}.

\begin{proposition}\label{Proposition ManfioTojeiro}
    Any isometric immersion $g: M_{c}^{n} \rightarrow \mathbb{R}_\mu^{n+2}$, $\mu\in \{0,1\}$, of a Riemannian manifold with dimension $n \geq 3$ and constant sectional curvature $c \neq 0$ has a flat normal bundle.
\end{proposition}
\noindent \emph{Proof of Proposition \ref{prop:n=3}:}
Assume that 
$ 
f\colon M_c^{3}\rightarrow \mathbb{R}^2\times \mathbb{Q}_{\epsilon}^{2} 
$, $\epsilon\in \{-1, 1\}$, 
is a hypersurface with constant sectional curvature $c\neq0$, and let $ A $ be the shape operator of $ f $ with respect to a (local) unit normal vector field $ N $. 
Let $\tilde{f} = j \circ f$ be the composition of $f$ with the  inclusion $j\colon \mathbb{R}^2\times \mathbb{Q}_{\epsilon}^{2}\to\mathbb{R}^2\times \mathbb{R}^3_{\sigma(\epsilon)}=\mathbb{R}^5_{\sigma(\epsilon)}$.
By Proposition~\ref{Proposition ManfioTojeiro}, $\tilde{f}$ has a flat normal bundle, which is equivalent to saying that $R$ and $A$ commute.
Since $c\neq 0$, $f$ cannot split in any open subset of $M_c^{3}$. Thus $ \xi \neq 0 $  in an open and dense subset ${V}\subset M_c^{3}$. From now on, we work on ${V}$.
Therefore, $R$ has eigenvalues $0$, $r\in (0,1)$, and $1$, all of which with multiplicity one.  
Let  $ \{U, \| \xi \|^{-1}\xi, W\} $ be an orthonormal frame that diagonalizes simultaneously $ A $ and $ R $, with 
$$
\begin{array}{l}
AU = \lambda U, \,\,\,\, A\xi = \mu \xi, \,\,\text{ and } \,\,AW =\sigma W,\vspace{1ex}\\
RU=0, \,\,\,\, R\xi = r\xi, \,\,\text{ and } \,\,RW = W.
\end{array}
$$
The Gauss equation of $ f $ is  
\begin{equation}\label{lambdas}
c(X \wedge Y)Z =  (AX\wedge AY)Z + \epsilon(RX\wedge RY)Z \nonumber 
\end{equation}
for all $X, Y, Z\in \mathfrak{X}({V})$.
Thus
$$ \lambda\mu=\lambda\sigma=\mu\sigma + \epsilon \,r=c.$$
From $\lambda\mu=c$ or $\lambda \sigma=c$ it follows that  $\lambda \neq 0$. Then $\lambda\mu=\lambda \sigma$ implies that $\mu = \sigma$.
We can thus rewrite the preceding equations as 
\begin{equation}\label{relations betwwen lambda and mu}
        \lambda\mu=
           c= \mu^2 + \epsilon \, r.    
\end{equation}
Note that \eqref{relations betwwen lambda and mu}  implies that $\mu \ne \lambda$, for $r \in (0,1).$

Since $\xi$ is an eigenvector of $A$, it follows from \eqref{dert2} that $W(r)=\<\grad r, W\>=0$. From the second equality in \eqref{relations betwwen lambda and mu} it follows that 
\begin{equation}\label{derivada de lambda e mu}
    W(\mu)=0.
\end{equation} 
Eq.  \eqref{derR} gives
$
R \nabla_\xi W= \nabla_\xi W, 
$
and since $ \ker (I-R) = \mbox{span}\, \{W\}$ and $W$ is a unit vector field, then
\begin{equation}\label{derivada de W na direção xi}
\nabla_\xi W =  0.
\end{equation}
 Eq. \eqref{derS2} yields
\begin{equation}\label{deriavada de xi na direção de W}
\nabla_W \xi = (tI -R)AW=-\mu rW. \\
\end{equation}
Using \eqref{derivada de lambda e mu}, \eqref{derivada de W na direção xi}, and \eqref{deriavada de xi na direção de W}, 
we obtain 
\begin{equation} 
\begin{split}
(\nabla_\xi A)W - (\nabla_WA)\xi &= \xi(\mu)W + \mu\nabla_\xi W - A\nabla_\xi W - W(\mu)\xi - \mu\nabla_W \xi +A\nabla_W \xi \nonumber \\ 
&=  \xi(\mu)W + \mu^2rW - \mu^2rW \\
&=\xi(\mu)W.
\end{split}
\end{equation} 
On the other hand, applying \eqref{codazzi} to $W$ and $\xi$ gives
\begin{equation} 
(\nabla_\xi A)W - (\nabla_WA)\xi = -\epsilon (\<W, \xi \> R\xi- \< \xi, \xi \> RW ) =\epsilon\|\xi\|^2W. \nonumber
\end{equation}
Therefore,
\begin{equation}\label{derivada de mu na direção xi}
    \xi(\mu) = \epsilon \|\xi\|^2.
\end{equation}
By \eqref{dert2} we have
\begin{equation}\label{derivada de r na direção xi}
    \xi(r) = \< \grad r, \xi \>= - \< \grad t, \xi\> = \<2A\xi, \xi\> = 2\mu\|\xi\|^2.
\end{equation}
Differentiating the second equation in \eqref{relations betwwen lambda and mu} in the direction of $\xi$ and using \eqref{derivada de mu na direção xi}  and \eqref{derivada de r na direção xi} yield
\begin{equation}
0 = 2\mu \xi(\mu) + \epsilon\xi(r) = 2\epsilon\mu \|\xi\|^2 + 2\epsilon\mu\|\xi\|^2 = 4\epsilon\mu\|\xi\|^2, \nonumber
\end{equation}
which is a contradiction, for $\mu \neq 0$ and $\|\xi\| \neq 0.$
\vspace{1ex}\qed

Before proving the nonexistence of hypersurfaces with nonzero constant sectional curvature of $\mathbb{R}^k \times \mathbb{Q}^{n-k+1}_\epsilon$,  $\epsilon\in \{-1, 1\}$,  for \(n \geq 4\) and \(2 \leq k \leq n-1\), we recall some facts on flat bilinear forms and isometric immersions between space forms.

A symmetric bilinear form 
$\beta \colon V \times V \to W, $ where $V$ and $W$ are finite-dimensional vector spaces, is \emph{flat} with respect to an inner product 
 $
 \langle \cdot , \cdot \rangle \colon 
 W \times W \to \mathbb{R}
 $
 if 
 $$
 \langle \beta(X,Y), \beta (Z,T) \rangle - \langle \beta(X,T), \beta (Z,Y) \rangle = 0
 $$
 for all $X, Y, Z, T \in V$. 
The \emph{nullity subspace} of $\beta$ is defined as
$$
\mathcal{N}(\beta) =\{ X \in V \, \,\colon \beta(X,Y) = 0 \text{ for all } Y \in V \},
$$
and its \emph{image subspace} by 
$$
\mathcal{S}(\beta) = \mbox{span}\, \{   \beta(X,Y)  : X, Y \in V\}.
$$
 If the inner product  $\langle \cdot , \cdot \rangle $ on $W$ is either positive definite or Lorentzian, then the inequality 
 \begin{equation}\label{fbf}
\dim \mathcal{N}(\beta)\geq \dim V- \dim \mathcal{S}(\beta)
\end{equation}
holds, provided that $ \mathcal{S}(\beta) $ is nondegenerate if $\langle \cdot , \cdot \rangle $ is Lorentzian (see Lemma 4.10 and Lemma 4.14 in \cite{Tojeiro}).

Item $(i)$ of the following theorem is due to Cartan \cite{Ca}, and item $(ii)$ to O'Neill \cite{O} (see also \cite{Mo} and Theorem $5.1$ in \cite{Tojeiro}).

\begin{theorem}\label{th: Cartan}
Let $f\colon M_c^n\to \mathbb{Q}^{n+p}_{\tilde c}$ be an isometric immersion of a Riemannian manifold with constant sectional curvature $c$. Then the following assertions hold:
\begin{itemize}
\item[(i)] If $ c < \tilde c,$ then $p \geq n-1.$
\item[(ii)] If $c > \tilde c$ and $ p \leq n-2,$ then for any $ x \in M^n$ there exist a unit vector $ \zeta \in N_fM(x)$ and a flat bilinear form 
$$
    \gamma_x\colon T_xM \times T_xM \to \{ \zeta \}^{\perp} \subset N_fM(x)
$$
such that the second fundamental form of $f$ at $x$ is given by 
$$
\alpha^f(x) = \gamma_x + \sqrt{c-\tilde c\,} \langle \, , \rangle \zeta.
$$
\end{itemize}
\end{theorem}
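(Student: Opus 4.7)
The plan is to deduce both items from a single construction: encode the Gauss equation of $f$ as the flatness of an auxiliary symmetric bilinear form obtained by adjoining one dimension to the normal bundle. The Gauss equation reads, for all tangent $X,Y,Z,W$,
$$\langle\alpha^f(X,W),\alpha^f(Y,Z)\rangle - \langle\alpha^f(X,Z),\alpha^f(Y,W)\rangle = (c-\tilde c)\bigl(\langle X,W\rangle\langle Y,Z\rangle - \langle X,Z\rangle\langle Y,W\rangle\bigr).$$
Fix $x\in M^n$, set $W_x := N_fM(x) \oplus \mathbb{R}e$ with $e$ orthogonal to $N_fM(x)$, and define
$$\beta_x(X,Y) := \alpha^f_x(X,Y) + \sqrt{|c-\tilde c|}\,\langle X,Y\rangle\,e.$$
I would equip $W_x$ with the positive-definite product $\langle e,e\rangle = 1$ if $c<\tilde c$, and with the Lorentzian product $\langle e,e\rangle = -1$ if $c>\tilde c$. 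In either case, a direct substitution shows that the Gauss equation is equivalent to the flatness of $\beta_x$; moreover, since the $e$-coefficient of $\beta_x(X,X)$ equals $\sqrt{|c-\tilde c|}\|X\|^2$, one has $\mathcal{N}(\beta_x)=\{0\}$.

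For item $(i)$, the inequality \eqref{fbf} applied to the Euclidean flat form $\beta_x$ yields $0 = \dim\mathcal{N}(\beta_x)\geq n - \dim\mathcal{S}(\beta_x) \geq n - (p+1)$, so $p\geq n-1$. For item $(ii)$, since $\dim W_x = p+1 \leq n-1 < n$, the Lorentzian instance of \eqref{fbf} (which requires $\mathcal{S}(\beta_x)$ to be nondegenerate) cannot hold; hence $\mathcal{S}(\beta_x)$ is degenerate and its radical contains a nonzero null vector $v$. Writing $v = \eta + \|\eta\|e$ with $\eta\in N_fM(x)$ necessarily nonzero, the identity $\langle\beta_x(X,Y),v\rangle = 0$ for all $X,Y$ rearranges to
$$\langle\alpha^f_x(X,Y),\zeta\rangle = \sqrt{c-\tilde c}\,\langle X,Y\rangle, \qquad \zeta := \eta/\|\eta\|\in N_fM(x),$$
which is the umbilic identity in the direction $\zeta$. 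Defining $\gamma_x$ to be the $\{\zeta\}^\perp$-component of $\alpha^f_x$ gives the claimed decomposition, and a short check shows that the Gauss equation combined with the umbilic identity implies flatness of $\gamma_x$ as a form with values in $\{\zeta\}^\perp$.

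The main obstacle is the Lorentzian step in item $(ii)$: one must apply the Lorentzian form of \eqref{fbf} correctly, using its nondegeneracy hypothesis contrapositively, and then extract a null vector in the radical of $\mathcal{S}(\beta_x)$ whose $N_fM(x)$-component is genuinely nonzero, so that $\zeta$ exists as a bona fide unit normal. A small sign issue is that the precise coefficient $\sqrt{c-\tilde c}$ appearing in the conclusion is pinned down by the convention $\langle e,e\rangle = -1$ together with the choice of sign of the $e$-coefficient in $v$; reversing either convention would flip the sign inside the square root and require adjusting the normalization of $v$ accordingly.
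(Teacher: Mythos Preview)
Your argument is correct. Note, however, that the paper does not give its own proof of this theorem: it is stated as a classical result, with item~$(i)$ attributed to Cartan and item~$(ii)$ to O'Neill (see also Moore and Theorem~5.1 in the Dajczer--Tojeiro textbook cited as \cite{Tojeiro}). So there is no in-paper proof to compare against; your construction of the auxiliary flat bilinear form $\beta_x=\alpha^f_x+\sqrt{|c-\tilde c|}\,\langle\,,\,\rangle\,e$ on $N_fM(x)\oplus\mathbb{R}e$ and the subsequent use of \eqref{fbf} in both the Euclidean and Lorentzian cases is precisely the standard proof that appears in that reference. One small expository point: when you write the null vector as $v=\eta+\|\eta\|e$, you should first write $v=\eta+ae$ and deduce $a^2=\|\eta\|^2$ from $\langle v,v\rangle=0$ (which holds because $v\in\mathcal{S}(\beta_x)\cap\mathcal{S}(\beta_x)^\perp$); then $\eta\neq 0$ follows since $\eta=0$ would force $a=0$ and hence $v=0$. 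The sign ambiguity you flag is handled, as you say, by replacing $\zeta$ with $-\zeta$ if necessary.
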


We are now in a position to prove the following nonexistence result.

\begin{proposition} \label{prop:n > 3}  There does not exist any hypersurface with dimension $n\geq 4$ and nonzero constant sectional curvature in  $\mathbb{R}^k\times\mathbb{Q}^{n-k+1}_\epsilon$, $\epsilon\in \{-1, 1\}$, if  $ 2\leq k \leq n-1$. 
\end{proposition}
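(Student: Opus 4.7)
The plan is to compose $f$ with the inclusion $j$ from \eqref{inclusion j} to obtain an isometric immersion $\tilde{f} = j \circ f \colon M^n_c \to \mathbb{R}^{n+2}_\mu$ of codimension two, and then exploit flatness of its normal bundle to extract rigid algebraic constraints linking $A$ and $R$. By Proposition \ref{Proposition ManfioTojeiro}, $\tilde f$ has flat normal bundle; the two shape operators of $\tilde f$ that can be read off from \eqref{alpha2} are $A$ and a constant multiple of $R$, so the Ricci equation forces $[A, R] = 0$.

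Next I would show that $\xi$ cannot vanish on any open subset: if it did, Proposition \ref{product locally splits} together with Lemma \ref{lemma for dimension of kernel of Rb} (and the hypothesis $2 \leq k \leq n-1$, which prevents $R = 0$ or $R = I$) would force $f$ to split locally; but mixed-factor tangent planes of a locally-splitting hypersurface carry zero sectional curvature, contradicting $c \neq 0$. So on a dense open subset $R$ has three distinct eigenvalues $0$, $r \in (0,1)$, and $1$ with eigenspaces of dimensions $k-1$, $1$, and $n-k$. Using $[A, R] = 0$ I can then pick an orthonormal frame $\{U_1, \ldots, U_{k-1}, \xi/\|\xi\|, W_1, \ldots, W_{n-k}\}$ simultaneously diagonalizing $A$ and $R$, say $AU_i = \lambda_i U_i$, $A\xi = \mu\xi$, and $AW_j = \sigma_j W_j$.

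The heart of the argument would be to plug pairs of frame vectors into the Gauss equation \eqref{Gauss} (with $c_1 = 0$, $c_2 = \epsilon$) and compare with the sectional curvature $c$ of $M^n$. A routine calculation yields $\lambda_i\lambda_{i'} = c$ whenever $k - 1 \geq 2$, $\sigma_j\sigma_{j'} = c - \epsilon$ whenever $n - k \geq 2$, together with the mixed relations $\lambda_i\sigma_j = c$, $\lambda_i\mu = c$, and $\sigma_j\mu = c - \epsilon r$. In the generic range $3 \leq k \leq n - 2$ these force all $\lambda_i$ to equal a common $\lambda$ with $\lambda^2 = c$ and all $\sigma_j$ to equal a common $\sigma$ with $\sigma^2 = c - \epsilon$, whence $c^2 = \lambda^2\sigma^2 = c(c-\epsilon)$ and $c\epsilon = 0$, a contradiction.

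The main obstacle will be the boundary cases $k = 2$ and $k = n-1$, in which one of the two eigenspaces is one-dimensional and the corresponding pairwise product relation becomes vacuous. Here I would exploit the extra relation $\sigma\mu = c - \epsilon r$ coming from pairing a $W_j$ with $\xi$: for $k = 2$, the equations $\lambda\mu = \lambda\sigma = c$ give $\mu = \sigma$, whence $c - \epsilon = \sigma^2 = \sigma\mu = c - \epsilon r$ forces $r = 1$; symmetrically, for $k = n-1$ the relations $\lambda^2 = c$ and $\lambda\mu = \lambda\sigma = c$ yield $\mu = \sigma = \lambda$, so $c = \lambda^2 = \sigma\mu = c - \epsilon r$, forcing $\epsilon r = 0$. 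Either conclusion contradicts $r \in (0,1)$ and $\epsilon = \pm 1$, completing the proof.
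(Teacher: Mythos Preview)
Your proof is correct, and it follows a genuinely different route from the paper's. The paper treats the two signs of $\epsilon$ separately: for $\epsilon=1$ it appeals to the Cartan--O'Neill theorem (Theorem~\ref{th: Cartan}) to produce a normal frame $\{\eta,\zeta\}$ of $\tilde f$ with $A^{\tilde f}_\zeta=\sqrt{c}\,I$ and $\mbox{rank}\,A^{\tilde f}_\eta\le 1$, rotates this frame to $\{j_*N,\tilde\pi_2\circ\tilde f\}$, and reaches a contradiction by showing that the nontriviality of both $\ker R$ and $\ker(I-R)$ forces $\mbox{rank}\,A^{\tilde f}_\eta\ge 2$; for $\epsilon=-1$ it first passes through an umbilical inclusion into a semi-Riemannian space form and uses degeneracy of $\mathcal S(\alpha^F)$ to manufacture the analogous normal frame before arguing as in the spherical case.

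Your argument bypasses Theorem~\ref{th: Cartan} entirely. By invoking Proposition~\ref{Proposition ManfioTojeiro} (which the paper reserves for the $n=3$ case) you get $[A,R]=0$ directly, simultaneously diagonalize, and then exploit the fact that for $n\ge 4$ at least one of $\ker R$, $\ker(I-R)$ has dimension $\ge 2$: this supplies enough pairwise Gauss relations ($\lambda^2=c$ or $\sigma^2=c-\epsilon$, together with $\lambda\sigma=c$ and $\sigma\mu=c-\epsilon r$) to force a purely algebraic contradiction. The gains are that both signs of $\epsilon$ are handled uniformly, no Codazzi or differential identities are needed, and the machinery of flat bilinear forms with Lorentzian target is avoided. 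The paper's approach, by contrast, isolates more clearly the geometric mechanism (an umbilic normal direction plus a rank-one remainder), which is closer in spirit to the classical codimension-two theory for space-form submanifolds.
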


\proof
Assume that 
$f\colon M_c^n\to \mathbb{R}^k\times \mathbb{Q}^{n-k+1}_\epsilon$, $ n \geq 4$ and $ 2\leq k \leq n-1$,
is an isometric immersion of a Riemannian manifold with constant sectional curvature $c \neq 0,$ let $N$ be a (local) unit normal vector field to $f$, and let $A$ be the shape operator of $f$ with respect to $N$. 
We argue separately for the cases \(\epsilon = 1\) and  \(\epsilon = -1\).\vspace{1ex}\\
{\bf Case $\epsilon=1$}:
Let 
$
j \colon \mathbb{R}^k\times \mathbb{S}^{n-k+1}\to \mathbb{R}^k\times\mathbb{R}^{n-k+2}=\mathbb{R}^{n+2}
$ 
be the inclusion, and let 
$
\tilde f=j \circ f \colon  M^n \to \mathbb{R}^{n+2} 
$ 
be its composition with $f$.
It follows from item $(i)$ of Theorem~\ref{th: Cartan} that $c> 0$. By item $(ii)$,  for any $x \in M$ there exists an orthonormal basis $\{\eta, \zeta\}$ of  $ N_{\tilde f}M(x)$
such that $
A^{\tilde f}_\zeta=\sqrt{c}I$
 and 
$\mbox{rank}\,  A^{\tilde f}_\eta\leq 1$. 
Let  $\theta \in \mathbb{R}$  be such that
$$
j_*N=\cos \theta \eta +\sin \theta \zeta\,\,\,\,\mbox{and}\,\,\,\, \tilde{\pi}_2\circ \tilde{f}=-\sin\theta \eta+\cos\theta \zeta,
$$
where $\tilde{\pi}_2\colon \mathbb{R}^{n+2}\to \mathbb{R}^{n-k+2}$ is the orthogonal projection.
Then
\begin{equation}\label{A A R}
\sqrt{c} \, I= \sin\theta\, A+\cos \theta \, R \,\,\,\,\,\mbox{and}\,\,\,\,\,
A^{\tilde f}_\eta= \cos\theta\, A - \sin\theta \,R.
\end{equation}
Note that $\sin \theta$  cannot be equal to $0$, for otherwise the first of the preceding equations would contradict the fact that $\dim \ker R \geq k-1 \geq 1$.
Thus, 
\begin{equation}\label{eq:af}
A|_{\ker R}=\frac{\sqrt{c}}{\sin \theta}I, \,\,\,A|_{\ker (I-R)}=\frac{\sqrt{c}-\cos \theta}{\sin \theta}I\,\,\,\mbox{and}\,\,\,A\xi=\frac{\sqrt{c}-r\cos \theta}{\sin \theta}\xi,
\end{equation}
where $r$ is the eigenvalue of $R$ in $(0,1)$. Using \eqref{A A R} and \eqref{eq:af} we obtain 
$$
    A^{\tilde f}_\eta |_{\ker R} = \sqrt{c} \dfrac{\cos \theta}{ \sin \theta} I, \, \,\,A^{\tilde f}_\eta |_{\ker(I-R)} =  \dfrac{\sqrt{c}\cos \theta -1}{ \sin \theta} I, \text{ and } \,\,A^{\tilde f}_\eta \xi =  \dfrac{\sqrt{c}\cos \theta -r}{ \sin \theta} \xi. 
$$
Since both $\ker R$ and $\ker(I-R)$ are nontrivial,  the preceding equations imply that $\mbox{rank}\,  A^{\tilde f}_\eta \geq 2,$  a contradiction that  completes the proof in this case.\vspace{1ex}\\
{\bf Case $\epsilon=-1$}: Let 
$
j \colon \mathbb{R}^k\times \mathbb{H}^{n-k+1}\to \mathbb{R}^k\times\mathbb{R}_1^{n-k+2}=\mathbb{R}_1^{n+2}
$ 
be the canonical embedding, and let 
$
\tilde f=j \circ f \colon  M^n \to \mathbb{L}^{n+2} 
$ 
be its composition with $f$. Let $i$ be an umbilical inclusion of $\mathbb{R}_1^{n+2}$ into either $\mathbb{H}_c^{n+2,1}$ or $\mathbb{S}_c^{n+1, 2}$, depending on whether $c < 0$ or $c>0$, respectively. Here, $\mathbb{Q}_c^{N,\mu}$ denotes an $N$-dimensional pseudo-Riemannian manifold with a metric of index $\mu$ and constant curvature $c$. Then the second fundamental form of $F=i\circ \tilde f$ at $x$ is 
\begin{equation}\label{eq13}
\alpha^F(X,Y) = i_*\alpha^{\tilde{f}}(X,Y) + \sqrt{|c|} \<X,Y\>\eta
\end{equation}
for all $ X,Y \in T_xM$,
where $\eta$ is one of the two unit normal vectors to $i$ at $\tilde f(x)$.  Replacing the inner product $\< \,, \>$ in $N_{\tilde f}M(x)$ by $-\<\, , \>$  if $c>0$, it becomes a Lorentzian inner product in any case, with respect to which  $\alpha_F(x)$ is flat. Since  $\mathcal{N}(\alpha^F(x)) = {0}$ and $n\geq 4$, then $\mathcal{S}(\alpha^F(x))$ is degenerate by \eqref{fbf}. 
Thus, there exists a unit timelike vector $\zeta  \in N_{\tilde{f}}M(x)$ such that
\begin{equation}\label{zeta + eta}
i_* \zeta + \eta \in \mathcal{S}(\alpha^F) \cap \mathcal{S}(\alpha^F)^{\perp}.
\end{equation}
It follows from \eqref{eq13} and \eqref{zeta + eta}, changing $\zeta$ by $-\zeta$ if necessary, that
$
A_\zeta^{\tilde f} = \sqrt{|c|}I
$.
If $\rho \in N_{\tilde f}M(x)$ is a unit vector orthogonal to $\zeta$ in $N_{\tilde f}M(x)$, the Gauss equation of $\tilde f$
implies that 
$
\mbox{rank }  A_\rho^{\tilde f} \leq 1.
$
From now on we can argue as in the case $\epsilon=1$ to reach a contradiction.  \vspace{1ex}
\qed  

Propositions \ref{prop:n=3} and \ref{prop:n > 3} can be summarized as follows.

\begin{theorem} \label{nonexistence} There exists no hypersurface 
with nonzero constant sectional curvature in $ \mathbb{R}^k \times \mathbb{S}^{n-k+1}$ or $ \mathbb{R}^k \times \mathbb{H}^{n-k+1}$ for  $2\leq k\leq n-1$.
\end{theorem}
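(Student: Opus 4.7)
The plan is to observe that Theorem \ref{nonexistence} is a direct amalgamation of the two preceding propositions, so there is nothing beyond a dimension count to do. First I would rewrite the statement using the unified notation $\mathbb{Q}_\epsilon^{n-k+1}$ with $\epsilon \in \{-1,1\}$, so that $\epsilon=1$ corresponds to the spherical factor $\mathbb{S}^{n-k+1}$ and $\epsilon=-1$ to the hyperbolic factor $\mathbb{H}^{n-k+1}$, and I would note that the range $2\leq k\leq n-1$ tacitly forces $n\geq 3$.

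Next I would split on the dimension. In the extremal case $n=3$, the condition $2\leq k\leq n-1$ leaves only $k=2$, so the ambient space is exactly $\mathbb{R}^2\times\mathbb{Q}_\epsilon^2$ and Proposition \ref{prop:n=3} immediately rules out any hypersurface of nonzero constant sectional curvature, in either the spherical or the hyperbolic case. For $n\geq 4$ and any $k$ with $2\leq k\leq n-1$, the pair $(n,k)$ satisfies the hypothesis of Proposition \ref{prop:n > 3}, which again yields nonexistence for both values of $\epsilon$. The union of these two cases exhausts the full range of parameters in the theorem.

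There is essentially no obstacle to overcome at this stage: the real work has already been done inside the two propositions, whose proofs rely on quite different mechanisms, namely a hands-on computation with the commuting pair $(A,R)$ and the Codazzi equation in the three-dimensional case, versus the flat-bilinear-form machinery of Cartan and O'Neill after composing with the inclusion into $\mathbb{R}^{n+2}$ or $\mathbb{L}^{n+2}$ in the higher-dimensional case. All that is left for the proof of Theorem \ref{nonexistence} to verify is that the hypotheses of those two propositions cover every admissible triple $(n,k,\epsilon)$, which is precisely the trivial dimension split carried out above, so the proof reduces to a single sentence invoking the two earlier results.
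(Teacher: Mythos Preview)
Your proposal is correct and matches the paper's approach exactly: the paper simply states that Theorem \ref{nonexistence} summarizes Propositions \ref{prop:n=3} and \ref{prop:n > 3}, with no further argument given. The dimension split you describe ($n=3$ forces $k=2$ and invokes Proposition \ref{prop:n=3}; $n\geq 4$ invokes Proposition \ref{prop:n > 3}) is precisely the trivial observation the paper intends.
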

\section{Flat hypersurfaces of $\mathbb{R}^k \times \mathbb{Q}_\epsilon^{n-k+1}$}

To complete the local classification of hypersurfaces with constant sectional curvature $c$ of $\mathbb{R}^k \times \mathbb{S}^{n-k+1}$ and $\mathbb{R}^k \times \mathbb{H}^{n-k+1}$, $2\leq k\leq n-1$, it remains to study the case $c=0$. 
We start with the following lemmas.

\begin{lemma}\label{lemma for dimension of kernel of R}
Let $f\colon M^{n}\rightarrow \mathbb{Q}_{c_1}^{k}\times \mathbb{Q}_{c_2}^{n-k+1}$ be an isometric immersion. If $\ker R$ is invariant by $A$, then $(\ker R)^\perp$ and $(\ker (I- R))^\perp$ are totally geodesic.
\end{lemma}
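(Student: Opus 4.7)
The plan is to read ``totally geodesic'' in the standard sense---a distribution $D\subset TM$ is totally geodesic iff $\nabla_X Z \in D$ for every $X \in D^\perp$ and $Z \in D$---and to extract both conclusions from the derivation formula \eqref{derR}. I work on the open set where $\xi\neq 0$, so that $TM = \ker R \oplus \ker(I-R) \oplus \operatorname{span}\{\xi\}$ with $\xi$ orthogonal to both $\ker R$ and $\ker(I-R)$; on the complementary open set where $\xi\equiv 0$, Proposition~\ref{product locally splits} together with the immediate vanishing in \eqref{derR} dispatches both claims.

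Specializing \eqref{derR} to $Y = Z \in \ker R$ and using $Z\perp\xi$ yields
\[
R\,\nabla_X Z \;=\; -(\nabla_X R)Z \;=\; -\langle AX, Z\rangle\,\xi.
\]
The hypothesis $A\ker R\subset\ker R$ and the self-adjointness of $A$ give $A(\ker R)^\perp\subset(\ker R)^\perp$, so $\langle AX, Z\rangle = 0$ whenever $X \in (\ker R)^\perp$; hence $\nabla_X Z \in \ker R$, proving that $(\ker R)^\perp$ is totally geodesic. The analogous computation with $Y = W \in \ker(I-R)$ gives
\[
(I-R)\,\nabla_X W \;=\; \langle AX, W\rangle\,\xi,
\]
so $(\ker(I-R))^\perp$ is totally geodesic as soon as $\langle AX, W\rangle = 0$ for every $X \in (\ker(I-R))^\perp = \ker R \oplus \operatorname{span}\{\xi\}$ and $W \in \ker(I-R)$. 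The hypothesis disposes of the $\ker R$-component of $X$, reducing everything to the claim that $A\xi \in \operatorname{span}\{\xi\}$.

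Establishing $A\xi \in \operatorname{span}\{\xi\}$ is the main technical step. Writing $A\xi = W_0 + \mu\xi$ with $W_0 \in \ker(I-R)$, the plan is to apply the Codazzi equation \eqref{codazzi} with $X \in \ker R$ and $Y = \xi$: its right-hand side $c_1\|\xi\|^2 X$ lies in $\ker R$, and using $A\ker R\subset\ker R$ together with \eqref{derS2} one checks that $(\nabla_\xi A)X$ also stays in $\ker R$, while $(\nabla_X A)\xi = \nabla_X W_0 + X(\mu)\xi + \mu tAX - tA^2X$ has $\ker(I-R)$-component $\nabla_X W_0$ and $\operatorname{span}\{\xi\}$-component $X(\mu)\xi$. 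Projecting the Codazzi identity onto these two subspaces therefore produces $\nabla_X W_0 = 0$ and $X(\mu) = 0$ for every $X \in \ker R$. A further application of \eqref{codazzi} to a pair $(\xi, W)$ with $W \in \ker(I-R)$, combined with the parallelism of $W_0$ along $\ker R$ and the explicit formulas for $\nabla_W W_0$ and $\nabla_\xi W_0$ furnished by \eqref{derR}, should then collapse to $W_0\equiv 0$. Once $A\xi\in\operatorname{span}\{\xi\}$ is known, $A$ also preserves $(\ker(I-R))^\perp$ and the second totally geodesic property follows exactly as the first.
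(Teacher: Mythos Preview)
Your argument for $(\ker R)^\perp$ is correct and more efficient than the paper's: the paper decomposes $(\ker R)^\perp=\ker(I-R)\oplus\operatorname{span}\{\xi\}$ and verifies $\nabla_XY\in(\ker R)^\perp$ for each of the four combinations of $X,Y$ separately, whereas your single application of \eqref{derR} handles everything at once. One quibble: the characterization you state at the outset is mis-phrased --- the condition ``$\nabla_XZ\in D$ for all $X\in D^\perp$, $Z\in D$'' is equivalent to $D^\perp$ (not $D$) being totally geodesic --- but what you actually verify, namely $\nabla_XZ\in\ker R$ for $X\in(\ker R)^\perp$ and $Z\in\ker R$, \emph{is} the correct criterion for $(\ker R)^\perp$ to be totally geodesic, so the mathematics is fine.

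The genuine gap is in the second half. You correctly isolate that everything reduces to proving $A\xi\in\operatorname{span}\{\xi\}$ (the paper itself just says the argument is ``similar,'' glossing over precisely this point), but your Codazzi sketch does not close it. From Codazzi with $X\in\ker R$ and $Y=\xi$ you do get $X(\mu)=0$, but since $\langle\nabla_XW_0,\xi\rangle=-\langle W_0,\nabla_X\xi\rangle=-t\langle W_0,AX\rangle=0$, the projection argument yields only $\nabla_XW_0\in\ker R$, not $\nabla_XW_0=0$ as you claim. More seriously, the final step --- applying Codazzi to $(\xi,W)$ and asserting that the resulting identities ``should then collapse to $W_0\equiv0$'' --- is not carried out, and it is far from clear that it works: that Codazzi equation brings in the unknown decomposition of $AW$ for $W\in\ker(I-R)$ rather than isolating $W_0$. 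To finish you must actually perform this calculation and show it forces $\langle A\xi,W\rangle=0$, or supply an independent argument.
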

\proof We argue for $(\ker R)^\perp$, the argument for $(\ker (I- R))^\perp$ being similar. 
If $X, Y\in \Gamma(\ker (I-R))$, then \eqref{derR} gives
$$\nabla_XY=R\nabla_XY+\langle AX,Y\rangle \xi\in \Gamma((\ker R)^\perp).$$
If  $X\in \Gamma(\ker (I-R)$ and $Y\in \Gamma(\ker R)$, then \eqref{derS2} yields
$$\<\nabla_X\xi, Y\>=\<(tI-R)AX, Y\>=0,$$
because  $\ker R$ is invariant by $A$. Also, using \eqref{derR} we obtain
\begin{eqnarray*}\<\nabla_\xi X, Y\>&=&\<\nabla_\xi RX, Y\>\\
&=&\<(\nabla_\xi R)X+R\nabla_X \xi, Y\>\\
&=&\<(\nabla_\xi R)X, Y\>\\
&=&\<X,\xi\>\<A\xi, Y\>+ \<A\xi, X\>\<\xi, Y\>=0.\end{eqnarray*}
Finally, using  \eqref{derS2},   for any $Y\in \Gamma(\ker R)$ we have
$$\<\nabla_\xi \xi, Y\>=\<(tI-R)A\xi, Y\>=t\<\xi, AY\>=0,$$
since $\ker R$ is invariant by $A$. Thus, $(\ker R)^\perp$ is totally geodesic.\vspace{1ex}\qed

In the next statement, and in all statements that follow, given an extrinsic product $f=\id\times g\colon \mathbb{R}^k\times M^{n-k}\to \mathbb{R}^k\times \bar{M}^{n-k+1}$, where $g\colon M^{n-k}\to  \bar{M}^{n-k+1}$ is an isometric immersion,  $\id\colon  \mathbb{R}^k \to  \mathbb{R}^k$ always denotes the identity map of the corresponding Euclidean factor.

\begin{lemma}\label{kerRsubkerA}
Let $f\colon M^{n}\rightarrow \mathbb{R}^{k}\times \mathbb{Q}_c^{n-k+1}$ be a hypersurface. If $\ker R\subset \ker A$, then one of the following possibilities holds:
\begin{itemize}
\item[(i)] If $ \dim \ker R = k$ everywhere on $M^n$, then $M^n$ is locally (isometric to) a Riemannian product  $\mathbb{R}^{k}\times M^{n-k}$, and $f$ is an extrinsic product  $f=\id\times f_2\colon \mathbb{R}^{k}\times M^{n-k}\to \mathbb{R}^{k}\times \mathbb{Q}_c^{n-k+1}$, where 
 $f_2\colon  M^{n-k}\to  \mathbb{Q}_c^{n-k+1}$ is a hypersurface;
\item[(ii)] If $ \dim \ker R = k-1$  everywhere on $M^n$, then $M^n$ is locally (isometric to) a Riemannian product $\mathbb{R}^{k-1}\times M^{n-k+1}$, and $f$ is an extrinsic product $f=\id\times f_2\colon \mathbb{R}^{k-1}\times M^{n-k+1}\to \mathbb{R}^{k-1}\times (\mathbb{R}\times \mathbb{Q}_c^{n-k+1})$,
where $f_2\colon  M^{n-k+1}\to \mathbb{R}\times \mathbb{Q}_c^{n-k+1}$ 
is a hypersurface.
 \end{itemize} 
 \end{lemma}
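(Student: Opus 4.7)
The plan is to show that both $\ker R$ and $(\ker R)^\perp$ are totally geodesic, invoke the local de Rham decomposition theorem, and then exploit $A|_{\ker R}=0$ together with the product structure of the ambient to bring $f$ to the form $\id\times f_2$. The hypothesis $\ker R\subset \ker A$ makes $\ker R$ trivially $A$-invariant, so Lemma \ref{lemma for dimension of kernel of R} yields that $(\ker R)^\perp$ is totally geodesic. For $\ker R$ itself, equation \eqref{derR} gives $(\nabla_X R)Y = \langle Y,\xi\rangle AX + \langle AX,Y\rangle \xi = 0$ for $X,Y\in \Gamma(\ker R)$, and since also $(\nabla_X R)Y = -R\nabla_X Y$, one concludes $\nabla_X Y\in\ker R$. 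Case $(i)$ then follows immediately: with $\dim\ker R=k$, Lemma \ref{lemma for dimension of kernel of Rb} forces $f$ to split locally, and a direct check using \eqref{eq:L} shows that among the two splitting types only $f=\id_1\times f_2$ on $\mathbb{R}^k\times M_2^{n-k}$ produces $\dim\ker R=k$ (the other type yields $\dim\ker R=k-1$).

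For case $(ii)$, with $\dim\ker R=k-1$, de Rham's local decomposition gives $M^n=L_1^{k-1}\times L_2^{n-k+1}$ where $L_1$ integrates $\ker R$. Since $A|_{\ker R}=0$, each leaf $L_1\times\{p_2\}$ is totally geodesic also in the ambient, and by the proof of Lemma \ref{lemma for dimension of kernel of Rb} its image lies in $\mathbb{R}^k\times\{q(p_2)\}$, hence in an affine $(k-1)$-plane of $\mathbb{R}^k$; in particular $L_1$ is flat. The key step is to show that these planes are all parallel to a common $(k-1)$-subspace of $\mathbb{R}^k$. Taking coordinate vector fields $\{X_1,\dots,X_{k-1}\}$ on $L_1\cong\mathbb{R}^{k-1}$ and extending them trivially in the $L_2$-direction (so that $\nabla_Y X_i=0$ for $Y\in(\ker R)^\perp$), the Gauss formula combined with $AX_i=0$ gives
\[
\tilde\nabla_Y f_*X_i = f_*\nabla_Y X_i + \langle AY,X_i\rangle N = 0.
\]
Since $f_*X_i$ is horizontal in the Riemannian product $\mathbb{R}^k\times\mathbb{Q}_c^{n-k+1}$ and $\mathbb{R}^k$ is flat, this parallelism along $L_2$-curves forces $f_*X_i$ to be a fixed vector $v_i\in\mathbb{R}^k$, and the leaves are consequently parallel translates of the common $(k-1)$-plane $V=\spa\{v_i\}$.

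Decomposing $\mathbb{R}^k=V\oplus V^\perp$ and identifying $L_1$ with an open subset of $V$ via the basis $\{v_i\}$, one obtains $f(p_1,p_2)=(p_1+w(p_2),q(p_2))$ with $w(p_2)\in\mathbb{R}^k$. The ambient orthogonality of $f_*X_i=v_i$ and $f_*Y$ for $Y\in(\ker R)^\perp$, inherited from the orthogonality of $X_i$ and $Y$ in $M$, forces the $V$-component of $w$ to be constant; absorbing it by a translation in the identification $L_1\to V$ then yields $f=\id_{\mathbb{R}^{k-1}}\times f_2$ with $f_2\colon L_2\to\mathbb{R}\times\mathbb{Q}_c^{n-k+1}$, as desired. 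The most delicate point is the parallelism-of-planes step, which requires combining the parallelism of $\ker R$ in $M$ coming from $A|_{\ker R}=0$ with the flatness of the $\mathbb{R}^k$ factor and the orthogonality of $\ker R$ and $(\ker R)^\perp$ in the product structure of $M$.
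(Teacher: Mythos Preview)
Your argument is correct, and the overall strategy matches the paper's: both show that $\ker R$ and $(\ker R)^\perp$ are totally geodesic via \eqref{derR} and Lemma~\ref{lemma for dimension of kernel of R}, dispose of case~$(i)$ through Lemma~\ref{lemma for dimension of kernel of Rb}, and then split $f$ as a product in case~$(ii)$. The execution of case~$(ii)$, however, differs. The paper composes with the inclusion $j$ into $\R^{n+2}$ (or $\mathbb{L}^{n+2}$), observes from \eqref{alpha2} that under the hypothesis $\ker R$ coincides with the relative nullity $\mathcal{N}(\alpha^{\tilde f})$ of $\tilde f=j\circ f$, and then invokes a ready-made cylinder theorem (Proposition~7.4 in \cite{Tojeiro}) to split $\tilde f$ as $\id\times\tilde g$; since $\tilde f$ takes values in $\mathbb{R}^k\times\mathbb{Q}_c^{n-k+1}$, one reads off that $\tilde g$ lands in $\mathbb{R}\times\mathbb{Q}_c^{n-k+1}$. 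You instead remain inside the product ambient $\mathbb{R}^k\times\mathbb{Q}_c^{n-k+1}$ and establish the parallelism of the $\ker R$-leaves by hand, using the Gauss formula together with $AX_i=0$ to get $\tilde\nabla_Y f_*X_i=0$ and then the flatness of the $\mathbb{R}^k$-factor to conclude that $f_*X_i$ is a fixed vector. Your route is more self-contained and makes explicit what the cited proposition encapsulates; the paper's is shorter but leans on an external reference.
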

\proof Given $X, Y\in \Gamma(\ker R)$, by \eqref{derR} we have
$$-R\nabla_XY=(\nabla_XR)Y=\<Y, \xi\>AX+\<AX, Y\>\xi=0,$$
hence $\nabla_XY\in \Gamma(\ker R)$.  Thus, $\ker R$ is a totally geodesic distribution.   
 On the other hand, $(\ker R)^\perp$ is also totally geodesic by Lemma \ref{lemma for dimension of kernel of R}. 
If $\dim \ker R=k$, then $(i)$ holds by the second assertion in Proposition \ref{product locally splits}. 

Let $\tilde f=j\circ f$, where $j$ is the inclusion of $\mathbb{R}^{k}\times \mathbb{Q}_c^{n-k+1}$ into $\R^{n+2}$ or $\mathbb{R}_1^{n+2}$, depending on whether $c>0$ or $c< 0$, respectively.  If $\dim \ker R=k-1$, since $\ker R\subset \ker A$, then  $M^n$ is locally isometric to a Riemannian product $\mathbb{R}^{k-1}\times M^{n-k+1}$ 
 and $\tilde f$ splits as an extrinsic product $\tilde f=\id\times \tilde g$, where $\tilde g$ is the restriction of $\tilde f$ to $\{x_0\}\times M^{n-k+1}$ for some $x_0\in \mathbb{R}^{k-1}$, that is, to a leaf of $(\ker R)^\perp$ (see Proposition $7.4$ in \cite{Tojeiro}). Since $\tilde f$ takes values in $\mathbb{R}^{k}\times \mathbb{Q}_c^{n-k+1}$, then $\tilde g$ must take values in $\mathbb{R}\times \mathbb{Q}_c^{n-k+1}$, thus it gives rise to an isometric immersion  $g\colon  M^{n-k+1}\to \mathbb{R}\times \mathbb{Q}_c^{n-k+1}$ such that $\tilde g=\iota\circ g$, where  $\iota \colon \mathbb{R}\times \mathbb{Q}_c^{n-k+1} \to \mathbb{R}\times \mathbb{R}_{\sigma(c)}^{n-k+2}$ is the inclusion map.  \vspace{1ex}\qed 

We now separately consider the flat hypersurfaces of $\mathbb{R}^k \times \mathbb{S}^{n-k+1}$ and $\mathbb{R}^k \times \mathbb{H}^{n-k+1}$, $2\leq k\leq n-1$,   starting with the former case.

\begin{theorem} \label{thm:flat_sphere} Let $f\colon M^n\to \mathbb{R}^k\times \mathbb{S}^{n-k+1}$,  $2\leq k\leq n-1$, be a flat hypersurface. Then $k\in\{n-2, n-1\}$ and the following holds:
\begin{itemize}
\item[(i)] If $k=n-1$, then there exists an open and dense subset of $M^n$ where $f$ is locally   
an extrinsic product 
\begin{equation}\label{ext} f=\id \times g\colon  \mathbb{R}^{n-2}\times {M}^2\to \mathbb{R}^{n-2}\times (\mathbb{R}\times \mathbb{S}^2),\end{equation} where $g\colon M^{2}\to \mathbb{R}\times \mathbb{S}^2$ is a flat surface; if, in particular, $U\subset M^n$ is an open subset  such that $f|_U$ is as in \eqref{ext} with $g$ a vertical cylinder $g=\id \times \gamma\colon M^2=\mathbb{R}\times I\to \mathbb{R}\times \mathbb{S}^2$ over a unit speed curve $\gamma \colon I\to \mathbb{S}^2$, then $f|_U$ can also be described as an extrinsic product
\begin{equation} \label{curve}
f|_U=\id \times \gamma\colon  \mathbb{R}^{n-1}\times I\to \mathbb{R}^{n-1}\times \mathbb{S}^{2}.
\end{equation}
\item[(ii)] If $k=n-2$, then $f$ is locally an extrinsic product 
$$f=\id \times g\colon  \mathbb{R}^{n-2}\times M^2\to \mathbb{R}^{n-2}\times \mathbb{S}^{3},$$ where $g\colon M^2\to \mathbb{S}^{3}$ is a flat surface.
\end{itemize}
\end{theorem}
\proof  Let 
$
j \colon \mathbb{R}^k\times \mathbb{S}^{n-k+1}\to \mathbb{R}^k\times\mathbb{R}^{n-k+2}=\mathbb{R}^{n+2}
$ 
be the inclusion,  and let 
$
\tilde f=j \circ f \colon  \mathbb{R}^k\times \mathbb{S}^{n-k+1} \to \mathbb{R}^{n+2} 
$ 
be its composition with $f$. 
The flatness of $M^n$ implies that $\alpha^{ \tilde f}(x)$
is a flat bilinear form for any $x \in M^n$. Thus
$
\dim  \mathcal{N}(\alpha^{\tilde f}) \geq n-2
$ by \eqref{fbf}, and since $\mathcal{N}(\alpha^{\tilde f})=\ker A\cap \ker R$ by \eqref{alpha2}, 
$$
n-2\leq \dim (\ker A\cap \ker R)\leq \dim  \ker R \leq k\leq n-1
$$ 
at every $x\in M^n$. Hence $k\in\{n-2, n-1\}$, and there are four possible cases:

\begin{itemize}
\item[(a)] $\dim (\ker A\cap \ker R)= \dim  \ker R = k = n-1$;
\item[(b)] $\dim (\ker A\cap \ker R)= \dim  \ker R <k = n-1$;
\item[(c)] $\dim (\ker A\cap \ker R)= \dim  \ker R = k=n-2$;
\item[(d)] $\dim (\ker A\cap \ker R)< \dim  \ker R = k = n-1$.
\end{itemize}

Notice that if $\dim  \ker R = k = n-1$ in some open subset $U\subset M^n$, then $f|_U$ is as in \eqref{curve} by the second assertion of Proposition \ref{product locally splits}. Since $\ker R\subset \ker A$ for such an $f$, it follows that case $(d)$ cannot occur in any open subset. 

Therefore, if $k = n-1$, denoting by $U_1$ the interior of the subset of $M^n$ where case $(a)$ occurs, and by $U_2\subset M^n$ the open subset where case $(b)$ occurs, then $U_1\cup U_2$ is (open and) dense in $M^n$.
 
In $U_1$, we have  $\ker R \subset \ker A$ and $\dim \ker R= k$. Thus, $f$ is locally as in \eqref{curve} in $U_1$ by item $(i)$ of Lemma \ref{kerRsubkerA}. 

In $U_2$, we have  $\ker R \subset \ker A$ and $\dim \ker R= k-1$.
Hence,   $f$ is locally as in \eqref{ext} in $U_2$ by item $(ii)$ of Lemma \ref{kerRsubkerA}, with $M^2$ being flat by the flatness of $M^n$. This completes the proof of the statement when $k=n-1$. 

  If $k=n-2$,  case $(c)$  occurs everywhere in $M^n$, hence  $\ker R \subset \ker A$ and $\dim \ker R= k=n-2$ in $M^n$. Hence, $f$ is locally as in $(ii)$ by the item $(i)$ of Lemma \ref{kerRsubkerA}, with $M^2$ being flat by the flatness of $M^n$.
  This proves the statement for $k=n-2$ and completes the proof of the theorem.
\vspace{1ex}\qed

    Finally, we give a local classification of the flat hypersurfaces of $\mathbb{R}^k\times \mathbb{H}^{n-k+1}$, $2\leq k\leq n-1$.

\begin{theorem} \label{thm: flatH} 
If 
$
f\colon M^n\to \mathbb{R}^k\times \mathbb{H}^{n-k+1},
$ 
$
2\leq k\leq n-1,
$ 
is a flat hypersurface, then the following holds:
\begin{itemize}
\item[(i)] If $k=n-1$,  there exists an open and dense subset of $M^n$ where $f$ is locally an extrinsic product
\begin{equation}\label{h2i}
f=\id\times g\colon \mathbb{R}^{n-2}\times M^{2}\to \mathbb{R}^{n-2}\times (\mathbb{R}\times \mathbb{H}^{2}),
\end{equation}
where 
$
g\colon  M^{2}\to  \mathbb{R}\times \mathbb{H}^{2}
$ 
is a flat surface; if, in particular, $U\subset M^n$ is an open subset  such that $f|_U$ is as in \eqref{h2i} with $g$ a vertical cylinder $g=\id \times \gamma\colon M^2=\mathbb{R}\times I\to \mathbb{R}\times \mathbb{H}^2$ over a unit speed curve $\gamma \colon I\to \mathbb{H}^2$, then $f|_U$ can also be described as an extrinsic product
\begin{equation} \label{h2ii}
f|_U=\id \times \gamma\colon  \mathbb{R}^{n-1}\times I\to \mathbb{R}^{n-1}\times \mathbb{H}^{2};
\end{equation} 
\item[(ii)] If $k=n-2$, there exists an open and dense subset of $M^n$ where $f$ is locally an extrinsic product
\begin{equation}\label{h2iii}
f=\id\times h\colon \mathbb{R}^{n-3}\times M^{3}\to \mathbb{R}^{n-3}\times(\R\times  \mathbb{H}^{3}),
\end{equation}
where 
$
h\colon  M^{3}\to \mathbb{R}\times \mathbb{H}^{3}
$ 
is a flat  hypersurface; if, in particular, $U\subset M^n$ is an open subset  such that $f|_U$ is as in \eqref{h2iii} with $h$ a vertical cylinder $h=\id \times g\colon M^2=\mathbb{R}\times I\to \mathbb{R}\times \mathbb{H}^2$ over a flat surface $g \colon M^2\to \mathbb{H}^3$, then $f|_U$ can also be described as an extrinsic product
\begin{equation}\label{h2iv} 
f|_U=\id\times g\colon \mathbb{R}^{n-2}\times M^{2}\to \mathbb{R}^{n-2}\times \mathbb{H}^{3};
\end{equation} 
\item[(iii)]
If $2\leq k\leq n-3$, there exists an open and dense subset of $M^n$ where $f$ is locally an extrinsic product
\begin{equation}\label{h2v}
f=\id\times h\colon \mathbb{R}^{k-1}\times M^{n-k+1}\to \mathbb{R}^{k-1}\times(\R\times  \mathbb{H}^{n-k+1}),
\end{equation}
where 
$
h\colon  M^{n-k+1}\to \mathbb{R}\times \mathbb{H}^{n-k+1}
$ 
is a flat  hypersurface; if, in particular, $U\subset M^n$ is an open subset  such that $f|_U$ is as in \eqref{h2v} with $h$ a vertical cylinder $h=\id \times g\colon M^{n-k+1}=\mathbb{R}\times M^{n-k}\to \mathbb{R}\times \mathbb{H}^{n-k+1}$ over a horosphere $g \colon M^{n-k}\to \mathbb{H}^{n-k+1}$, then $f|_U$ can also be described as an extrinsic product
\begin{equation}\label{h2vi}
f|_U=\id\times g\colon \mathbb{R}^{k}\times M^{n-k}\to \mathbb{R}^{k}\times \mathbb{H}^{n-k+1}.
\end{equation}
\end{itemize} 
\end{theorem}

\begin{remark}
\emph{The flat hypersurfaces of $ \mathbb{R}\times \mathbb{H}^{3}$ and $ \mathbb{R}\times \mathbb{H}^{n-k+1}$ that appear
 in items $(ii)$ and $(iii)$, respectively, are all rotational hypersurfaces that were classified in Theorem 4.4 of \cite{ManfioTojeiro}.}
\end{remark}

\proof  Let 
$
j \colon \mathbb{R}^k\times \mathbb{H}^{n-k+1}\to \mathbb{R}^k\times\mathbb{R}_1^{n-k+2}=\mathbb{R}_1^{n+2}
$ 
be the inclusion,  and let 
$
\tilde f=j \circ f \colon  \mathbb{R}^k\times \mathbb{H}^{n-k+1} \to \mathbb{R}_1^{n+2} 
$ 
be its composition with $f$. 
The flatness of $M^n$ implies that $\alpha^{ \tilde f}(x)$
is a flat bilinear form for any $x \in M^n$, with respect to the Lorentzian inner product on $N_{\tilde f}M(x)$. If $\mathcal{S}(\alpha^{\tilde{f}})$ is nondegenerate, that is, $\mathcal{S}(\alpha^{\tilde{f}}) \cap \mathcal{S}(\alpha^{\tilde{f}})^{\perp} = \{0\}$, then
$
\dim  \mathcal{N}(\alpha^{\tilde f}) \geq n-2
$ by \eqref{fbf}, and since $\mathcal{N}(\alpha^{\tilde f})=\ker A\cap \ker R$ by \eqref{alpha2}, then
$$
n-2\leq \dim (\ker A\cap \ker R)\leq \dim  \ker R \leq k\leq n-1
$$ 
at every $x\in M^n$.
Hence $k\in\{n-2, n-1\}$, and we have the same four cases $(a)-(d)$ as in the proof of Theorem \ref{thm:flat_sphere}. 

As in that proof, if $
\dim \ker R = k=n-1
$ on an open subset $U\subset M^n$,  
then $f|_U$ is given as in \eqref{h2ii} by the second assertion in Proposition \ref{product locally splits}. Since $\ker R\subset \ker A$ for such a hypersurface, it follows that case $(d)$ cannot occur in any open subset.  

Assume now that $\mathcal{S}(\alpha^{\tilde{f}})$ is degenerate. Then, for a choice of one of the two unit normal vectors $N$ to $f$ in $\mathbb{R}^k \times \mathbb{H}^{n-k+1}$,  the subspace $\mathcal{S}(\alpha^{\tilde{f}})\cap \mathcal{S}(\alpha^{\tilde{f}})^{\perp}$ is spanned by the light-like vector $j_*N - f_2$.  This implies that 
\begin{eqnarray*}
    0 &=& \< \alpha^{\tilde{f}}(X,Y),  j_*N - f_2 \> \\
     &=& \< \,\,  \<AX,Y\> j_*N - \<RX,Y\>f_2, j_*N - f_2 \,\,\> \\
     &= &\<AX,Y\> - \<RX,Y\> 
\end{eqnarray*}
 for all $X,Y \in T_xM,$ hence $A=R$.

Now we prove the statement in $(iii)$. By the above, under the assumption that $2\leq k\leq n-3$,  it follows that $\mathcal{S}(\alpha^{\tilde{f}})(x)$ is degenerate for all $x\in M^n$. Denoting by $V_1$ the open subset where $\dim \ker R=k-1$ and by $V_2$ the interior of the subset where $\dim \ker R=k$, it follows that $V_1\cup V_2$ is (open and) dense in $M^n$.

  In $V_1$,  taking into account that $A=R$, it follows from part $(ii)$ of Lemma \ref{kerRsubkerA},  flatness of $M^n$, and  Theorem $7.2$  in  \cite{ManfioTojeiro}, that
  $f$ is locally  as in \eqref{h2v}. 
  
  In $V_2$, it follows from part $(i)$ of Lemma \ref{kerRsubkerA}, flatness of $M^n$, and the fact that $A=R$, that $f$ is locally as in \eqref{h2vi}. This completes the proof of the assertion in $(iii)$.

  Suppose now that $k=n-2$. Let $W_1$ be the open subset where  $\mathcal{S}(\alpha^{\tilde{f}})$ is nondegenerate, and let $W_2$ and $W_3$ be the interiors of the subsets of $M^n$ where $\mathcal{S}(\alpha^{\tilde{f}})$ is degenerate
  and $\dim \ker R$ is either $k$ or $k-1$, respectively. Then $W_1\cup W_2\cup W_3$ is (open and) dense in $M^n$. As shown in the first paragraph of the proof,  case $(c)$ occurs on $W_1$, that is, $\ker A\subset \ker R$ and $\dim \ker R=k=n-2$. Hence,  $f$ is locally as in \eqref{h2iv} in $W_1$ by item $(i)$ of Lemma \ref{kerRsubkerA}, with $M^2$ being flat by the flatness of $M^n$.

  In $W_2$, it follows from part $(i)$ of Lemma  \ref{kerRsubkerA}, flatness of $M^n$, and the fact that $A=R$, that $f$ is locally again as in \eqref{h2iv}, but now with $g$ being a horosphere.

  In $W_3$, it follows from part $(ii)$ of Lemma  \ref{kerRsubkerA},  flatness of $M^n$ and  Theorem $7.2$  in  \cite{ManfioTojeiro} that
  $f$ is locally  as in \eqref{h2iii}. The assertion in $(ii)$ is thus proved.
  
  Finally, assume that $k=n-1$. Let $U_1$ be the open subset where  $\mathcal{S}(\alpha^{\tilde{f}})$ is nondegenerate and $\dim \ker R= n-2$, let $U_2$ be the interior of the subset of $M^n$ where  $\mathcal{S}(\alpha^{\tilde{f}})$ is nondegenerate and $\dim \ker R=n-1$, and let $U_3$ and $U_4$ be the interiors of the subsets of $M^n$ where $\mathcal{S}(\alpha^{\tilde{f}})$ is degenerate
  and $\dim \ker R$ is either $k$ or $k-1$, respectively. Then  $U_1\cup U_2\cup U_3\cup U_4$ is (open and) dense in $M^n$.

  In $U_1$, case $(b)$ occurs, so  $\ker R \subset \ker A$ and $\dim \ker R= n-2=k-1$. Hence, $f$ is locally as in \eqref{h2i}  by item $(ii)$ of Lemma \ref{kerRsubkerA} ( $M^2$ being flat by the flatness of $M^n$).

  In $U_2$, case $(a)$ occurs, so  $\ker R \subset \ker A$ and $\dim \ker R= k$. Thus,  $f$ is locally as in \eqref{h2ii} by item $(i)$ of Lemma \ref{kerRsubkerA}. 

  In $U_3$, it follows from part $(i)$ of Lemma  \ref{kerRsubkerA}, flatness of $M^n$, and the fact that $A=R$, that $f$ is locally again as in \eqref{h2ii}, with $\gamma$ being a horocycle.
  
  In $U_4$, it follows from part $(ii)$ of Lemma  \ref{kerRsubkerA},  flatness of $M^n$,  and the fact that $A=R$,  that
  $f$ is locally again as in \eqref{h2i},  with $g$ being a flat rotation surface. This proves the assertion in $(i)$ and completes the proof of the theorem.\qed

\bigskip
\noindent
\textsc{Arnando N. S. Carvalho}\\
Institute of Mathematics and Computer Sciences (ICMC)\\
University of São Paulo (USP)\\
SP 13566590  São Carlos\\
Brazil|\\
\texttt{arnandonelio@usp.br}

\medskip

\noindent
\textsc{Ruy Tojeiro}\\
Institute of Mathematics and Computer Sciences (ICMC) \\
University of São Paulo (USP)\\
SP 13566590  São Carlos\\
Brazil\\
\texttt{tojeiro@icmc.usp.br}

\end{document}